\newtheorem{theorem}{Theorem}[section]
\theoremstyle{definition}
\newtheorem{definition}[theorem]{Definition}
\newtheorem{lemma}[theorem]{Lemma}
\newtheorem{remark}[theorem]{Remark}
\numberwithin{equation}{section}
\newcommand{\R}{\mathbb{R}}
\newcommand{\bibtitle}[1]{\emph{#1}}
\newcommand{\dfn}[1]{\textit{#1}}
\newcommand{\st}{\, | \;} 
\newcommand{\lk}{\mathrm{lk}}
\begin{document}

\title{Linear embeddings of $K_9$ are triple linked}

\author{Ramin Naimi}
\address{Occidental College, Los Angeles, CA 90041}
\email{rnaimi@oxy.edu}

\author{Elena Pavelescu}
\address{Oklahoma State University, Stillwater, OK 74078}
\email{elena.pavelescu@okstate.edu}

\thanks{This research was supported in part by NSF grant DMS-0905300.}

\subjclass[2000]{Primary 05C10, 57M25}

\date{\today}

\keywords{oriented matroid, spatial graph, intrinsically linked, linkless embedding, linear embedding, straight-edge embedding}

\begin{abstract} 
We use the theory of oriented matroids to  show that any linear embedding of $K_9$, 
the complete graph on nine vertices, into 3-space
contains a non-split link with three components.
This shows that Sachs' conjecture 
on linear, linkless embeddings of graphs,
whether true or false, 
does not extend to 3-links.
\end{abstract}

\maketitle

\section{Introduction}
In 1977, Brown~\cite{Br} showed that every {linear (straight-edge) embedding}
of $K_7$, the complete graph on 7 vertices, in $\R^3$ contains a trefoil.
In the early 1980's, Sachs~\cite{Sa1, Sa2}, and Conway and Gordon~\cite{CG}, showed that $K_6$ is \dfn{intrinsically linked}, i.e., every embedding of it in $\R^3$ (or $S^3$), linear or not, 
contains two disjoint cycles that form a nontrivial link.
Conway and Gordon also showed that $K_7$ is \dfn{intrinsically knotted}, i.e., every embedding of it in $\R^3$ contains a nontrivial knot.
Sachs conjectured that if a graph has a \dfn{linkless embedding}, 
i.e., an embedding in $\R^3$ with no non-trivial links,
then it has a linear linkless embedding.
As far as we know, this conjecture remains open.


In \cite{FNP}, Flapan, Naimi, and Pommersheim showed that 
$K_{10}$ is \dfn{intrinsically $3$-linked} (I$3$L),
i.e., every embedding of it in $\R^3$ contains a non-split 3-component link.
They also showed that $K_9$ is not I3L by illustrating
an embedding of $K_9$ that contains no non-split $3$-link.
This embedding, however, was not a linear embedding.
So it is natural to ask whether
there also exists a linear embedding of $K_9$ that contains no non-split $3$-link;
and, more generally,
whether Sachs' conjecture for 2-links holds for 3-links.
We see below that the answer is negative.

\begin{theorem}
\label{mainTheorem}
Every linear embedding of $K_9$ in $\mathbb{R}^3$ contains a non-split link with three components.
\end{theorem}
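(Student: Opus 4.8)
\section*{Proof proposal}

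The plan is to translate the topological assertion into a purely combinatorial statement about the rank-$4$ \dfn{oriented matroid} (equivalently, the chirotope $\chi$) determined by the nine points. First I would assume the nine points are in general position, no four coplanar, which is permissible by a standard perturbation argument, and homogenize so that they become nine vectors in $\R^4$; then $\chi$ records the sign of every $4\times4$ determinant. The crucial point is that whether two disjoint triangles $\{a,b,c\}$ and $\{d,e,f\}$ are linked is a Boolean function of $\chi$ restricted to the six vertices: their linking number mod $2$ equals the number of edges of one triangle whose segment pierces the triangular disk of the other, and each piercing is read off from the sign $\chi(d\,e\,f\,x)$ (which side of the plane $def$ a point $x$ lies on) together with the signs $\chi(x\,y\,d\,e),\chi(x\,y\,e\,f),\chi(x\,y\,f\,d)$ (whether the piercing point falls inside the triangle $def$). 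Thus every pairwise $\lk$ is determined by $\chi$.

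Next I would reduce the goal: it suffices to partition the nine vertices into three triangles $A,B,C$ whose \emph{linking graph} is connected, i.e.\ at least two of $\lk(A,B),\lk(B,C),\lk(C,A)$ are odd, since a connected linking graph forces the $3$-component link to be non-split (any splitting sphere would separate the components into groups with vanishing mutual linking numbers). The engine for manufacturing linked pairs is a local analysis of $K_6$. Here I would establish that in any general-position linear embedding of $K_6$ the number of linked complementary triangle pairs is $1$ or $3$: linkedness of a $(3,3)$ split corresponds to the two triangular disks crossing so that each boundary pierces the other's disk exactly once, a special Radon-type crossing, and the Grassmann--Plücker relations obeyed by a realizable $\chi$, together with a convex-position analysis, pin the count to these two values according to the convex-hull type of the six points (the Conway--Gordon parity alone gives only oddness).

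With this local data, the global step is to assemble a single good partition. The cleanest route I would try is to fix one triangle $T_1$ whose spanning plane separates the remaining six vertices, and then split those six into two triangles $T_2,T_3$ with $\lk(T_1,T_2)$ and $\lk(T_1,T_3)$ both odd, producing a star-shaped, hence connected, linking graph; tracking how the six points pierce the disk of $T_1$ turns this into a feasibility question about $\chi$ that I would push through by case analysis on the convex-hull structure of the nine points, namely how many are interior and how the hull is triangulated. Throughout I would organize the search by double counting over the $\binom{9}{6}=84$ hexads, each contributing $1$ or $3$ linked pairs, against the $280$ partitions into three triangles.

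The hard part, and what makes the theorem genuinely nontrivial, is that linearity must be used in an essential way: no soft topological argument can work, because Conway--Gordon parity is consistent with the total \emph{absence} of a non-split $3$-link, which is exactly what the Flapan--Naimi--Pommersheim non-linear embedding of $K_9$ exhibits. A direct count confirms this obstruction: if every partition had at most one linked pair, the total number $L$ of linked disjoint triangle pairs would be at most $280$, while the hexad estimate only gives $L\le 3\cdot 84=252$, so parity and counting by themselves cannot force a connected linking graph. The real work is therefore to isolate precisely which rank-$4$ oriented matroids on nine elements are realizable and to verify that \emph{every} one admits the desired partition. I expect this to be the main obstacle, since it must also rule out Borromean-type sign patterns in which all pairwise linking numbers could conspire to vanish, and I anticipate it will require a carefully structured, possibly computer-assisted, enumeration guided by the convex-hull case division above.
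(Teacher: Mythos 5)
There is a genuine gap. Your framework is exactly the paper's: pass to the rank-$4$ uniform oriented matroid of the nine points, observe that linking of disjoint triangles is read off from the chirotope (equivalently, from which circuits carry a $(3\textrm{-}2)$-partition), and reduce the theorem to finding three disjoint triangles one of which links the other two. Your counting observation that parity plus the $K_6$ local count ($1$ or $3$ linked pairs per hexad, giving $84\le L\le 252$ against $280$ partitions) cannot force a connected linking pattern is also correct and is precisely why the theorem is not soft. But at that point the proof stops: the decisive step --- showing that \emph{every} relevant oriented matroid admits such a triple --- is only gestured at. Your proposed global construction (fix a triangle $T_1$ whose plane separates the remaining six vertices and split them into $T_2,T_3$ each linking $T_1$) is not shown to be always achievable, and no convex-hull case division is actually carried out; you yourself concede this is ``the main obstacle'' and will likely need a computer. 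That concession is accurate: the paper's proof of this step \emph{is} the computation. It runs through the Finschi database of all $9{,}276{,}595$ orientation classes of uniform rank-$4$ oriented matroids on $9$ elements, takes all $256$ reorientations of each (over $2.3$ billion matroids), discards the cyclic ones, and verifies in every acyclic one the existence of disjoint triangles $T,T',T''$ with $T$ linking both $T'$ and $T''$.

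One further point where your plan would run into trouble: you propose to ``isolate precisely which rank-$4$ oriented matroids on nine elements are realizable.'' Deciding realizability of oriented matroids is itself a hard problem (there is no known effective characterization at this size), and the paper deliberately sidesteps it by proving the \emph{stronger} statement that every \emph{acyclic} uniform oriented matroid in $\mathrm{OM}(9,4)$ --- realizable or not --- contains a $3$-link in the combinatorial sense of Definition~\ref{linkedtriangles}. If you pursue your route, you should likewise enlarge the class to all acyclic uniform oriented matroids rather than attempting to characterize the affine ones. Your local lemma that a linear $K_6$ has exactly $1$ or $3$ linked complementary pairs is true but would also need a real proof (the appeal to Grassmann--Pl\"ucker relations and ``convex-position analysis'' is not one), and in any case it does not by itself close the argument, as your own counting shows.
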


It is not difficult to show that every finite graph has only finitely many linear embeddings up to isotopy.
So, in theory, one could check every linear embedding of $K_9$ to see whether or not there is one with no non-split $3$-links.
Finding all linear embeddings of $K_9$ up to isotopy, however, is a different story.
We were inspired by the paper of Ramirez Alfonsin~\cite{RA} to use Oriented Matroid theory to approach this problem.
To every linear embedding of a graph one can in a natural way associate an oriented matroid.
The oriented matroid determines the embedding of the graph 
up to ``linear embedding isotopy''; 
in particular, it determines which pairs of disjoint cycles in the embedded graph are linked.
Linear embeddings of $K_9$, in particular, give non-degenerate uniform oriented matroids of rank 4 on 9 elements. 
And the set of all such oriented matroids has already been found by the use of computers \cite{Fi}.
This provided us with a great tool for working on the $K_9$ problem.
It turns out that there are over two billion such oriented matroids.
We wrote a Mathematica computer program, which we ran in parallel on 31 computers, for 36 hours on average, to go through these oriented matroids.

We would also like to address the issue of ``verifiability.'' 
Although our computer program is fully documented 
and available on the arXiv~\cite{NP-appendix} for download and verification,
it would be very time consuming for 
anyone to analyze and check the program for errors.
So we also offer the following evidence that the program works correctly.
\begin{enumerate}
\item
The number, and types, of non-split links
in linear embeddings of 
$K_6$ and $K_7$
have been studied by 
Hughes~\cite{Hu}, and Jeon et.\ al.~\cite{Je}, respectively.
We ran our program on OM$(6,4)$ and OM$(7,4)$, 
all uniform oriented matroids of rank~ 4 on 6 and 7 elements, respectively;
and we obtained the same results as 
\cite{Hu} and \cite{Je}. 
We have posted details of our program's computations and output
for OM$(6,4)$ and OM$(7,4)$
in a document titled
``Supporting Evidence: $K_6$ and $K_7$ ''  \cite{NP-appendix}.

\item
The database of oriented matroids maintained by Finschi~\cite{Fi}
lists all isomorphism classes of OM$(9,4)$.
As supporting evidence, 
we present the details of our program's computations and output for 
three ``randomly'' chosen isomorphism classes:
the 1st, 1000th, and 1000000th classes.
There are 256 oriented matroids in every isomorphism class;
for each acyclic oriented matroid in these three classes,
we list all non-split 3--links,
as well as details of how these 3--links are found by the computer program.
We have posted these details in a document titled 
``Supporting Evidence:  Three Isomorphism Classes in OM$(9,4)$''~\cite{NP-appendix}.
We have also posted a document titled
``Sample Manual Verification''~\cite{NP-appendix}.
In it we explain how any of these 3--links,
along with the program's computations leading to the 3--link,
can be fully verified by manual computation;
and we perform such a manual computation,
with all details included, for one of the oriented matroids.
The reader may randomly choose
any of the other listed oriented matroids and verify 
the non-split 3--links listed there.
Furthermore, anyone with access to Mathematica
can download our program from~\cite{NP-appendix},
have the program print out its computations
for any isomorphism class,
and then perform manual verifications of them.

\end{enumerate}


\section{Acknowledgements}
We thank Lew Ludwig for suggesting this problem to us; 
Lukas Finschi for his help on retrieving the oriented matroid database and for maintaing an awesome website on oriented matroids; 
and Sonoko Moriyama for sharing with us the oriented matroid database we needed. 
We are also grateful to the faculty of the mathematics department at Occidental College for offering to run our program on their computers, and to the referee who offered many helpful comments and suggestions.

\section{Background on Oriented Matroids}
Before giving a precise and detailed account of how oriented matroids are associated with linear embeddings of graphs,
we first give an informal explanation for the case of $K_9$.
Every linear embedding of a graph in $\R^3$ is determined solely by where its vertices are embedded.
In a linearly embedded \textit{complete} graph,
no four vertices are coplanar, since otherwise at least two edges would intersect each other in their interiors.
Suppose we are given a linear embedding $\Gamma$ of $K_9$.
We label the vertices $1, 2,  \ldots, 9$.
Given any four vertices $a_1, a_2, a_3, a_4$ with $a_i<a_{i+1}$,
we can form three vectors, $v_i = a_4-a_i$, $i=1,2,3$.
Since the four points $a_1, a_2, a_3, a_4$ are not coplanar,
$\{v_1,v_2, v_3\}$ is a basis for $\R^3$ and
the determinant of the matrix $M = [v_1 | v_2 | v_3]$ is nonzero.
We assign a $+$ or $-$ sign to $\{a_1, a_2, a_3, a_4\}$ according to whether $\det(M)$ is positive or negative.
Doing this for every set of four vertices of $\Gamma$ amounts to associating an oriented matroid to $\Gamma$: a uniform oriented matroid of rank $r$ on $n$ elements 
is an assignment of a $+$ or $-$ sign to every $r$-subset of the $n$ elements (called bases), subject to certain conditions (called ``chirotope conditions'').
It turns out that this set of $+$ and $-$ signs captures enough information for determining which pairs of triangles in $\Gamma$ are linked.
For three non-collinear points, we will refer to both the union of the three edges they determine and to their convex hull as a triangle.

A different but equivalent way to assign an oriented matroid to the embedding $\Gamma$ is via circuits instead of bases.
For each 5--subset of the 9 vertices of $\Gamma$,
either one of the vertices will be inside the tetrahedron determined by the remaining four vertices,
or two of the vertices will determine an edge that ``pierces'' the triangle determined by the remaining three vertices,
as in Figure~\ref{circuits}.
Accordingly, each 5--subset of the 9 vertices is given a 4--1 or a 3--2 partition.
This assignment of a 4--1 or a 3--2 partition to every 5-subset is sufficient for describing an oriented matroid of rank 4 on 9 elements: a uniform oriented matroid of rank $r$ on $n$ elements can be defined by assigning a partition to each of the $(r+1)$-subsets (called circuits) of its elements, subject to certain conditions (called ``circuit axioms'').
There is a procedure for obtaining the signs of the bases from the partitioned circuits, and vice versa.
Knowing exactly which edges pierce which triangles is sufficient for determining whether any two disjoint triangles are linked or not.
Thus, knowing the circuit partitions of the oriented matroid associated to $\Gamma$ is all we need to determine the links in it.
With this informal description of oriented matroids and their connection to linear embeddings and linking, we now move on to a more formal and detailed presentation.

We first give the definition via circuits.
A \textit{signed set} $X$
is an ordered pair of disjoint sets, written as  $(X^+, X^-)$.
The \textit{opposite} of a signed set $X$ is the signed set $-X = (X^-,X^+)$.
Thus, $(-X)^+ = X^-$, and $(-X)^- = X^+$.
The \dfn{underlying set} of $X$ is defined as $\underline{X}=X^+\cup X^-$.

An \textit{oriented matroid} $\mathcal{M}$ on a finite set $E$ is defined by a collection $\mathcal{C}$ of sets $C=(C^+, C^-)$ with $\underline{C}\subset E$,
called \textit{circuits} of $\mathcal{M}$, satisfying the following axioms:
\begin{enumerate}
\item for all $C_1\in \mathcal{C}$, $\underline{C_1}\ne \emptyset$ and $-C_1\in \mathcal{C}$; \hfill (symmetry) 
\item for all $C_1, C_2\in \mathcal{C}$, if $\underline{C_1}\subseteq \underline{C_2}$, then $C_1=C_2$ or $C_1=-C_2$; \hfill (incomparability)
\item for all  $C_1, C_2\in \mathcal{C}$ with $C_1\ne C_2$, if $e\in C_1^+\cap C_2^- $, then there exists $C_3\in \mathcal{C}$ such that $C_3^+\subset (C_1^+\cup C_2^+) \setminus \{e\}$ and $C_3^-\subset (C_1^-\cup C_2^-) \setminus \{e\}$. \hfill (weak elimination)
\end{enumerate}

There is a natural way to obtain a new oriented matroid from a given one by an operation called reorientation: 
For a subset $A\subset E$  consider a new collection of signed sets, 
$_{-A}\mathcal{C}=\{_{-A}C \st C \in \mathcal{C}  \}$, where $_{-A}C$ 
is the signed set with $(_{-A}C)^+ = (C^+\setminus A) \cup (C^-\cap A)$ and 
$(_{-A}C)^- = (C^-\setminus A) \cup (C^+\cap A)$.
The collection $_{-A}\mathcal{C}$ represents the set of circuits for an oriented matroid, denoted by $_{-A}\mathcal{M}$, 
which is said to be obtained from $\mathcal{M}$ by \textit{reorientation on the set $A$}. 
This operation defines an equivalence relation on the set of oriented matroids of fixed rank on a ground set $E$. 
Two oriented matroids on $E$,  $\mathcal{M}$ and $\mathcal{M'}$, belong to the same \textit{isomorphism class} if  there exists a set $A\subset E$ such that $\mathcal{M'}=_{-A}\mathcal{M}$.

Let $E$ denote a finite set of points spanning $\mathbb{R}^r$, and let $\mathcal{C}$ be a collection of non-empty signed sets $C=(C^+, C^-)$ with $\underline{C}\subset E$ such that
\begin{enumerate}
\item  no proper subset of $\underline{C}$ is the underlying set for an element in $\mathcal{C}$, 
\item  for all $x \in \underline{C}$, there exists $\alpha_x\in\mathbb{R}$ such that 
\[ \sum_{x\in \underline{C}}\alpha_x x = 0 \quad \mathrm{and}  \quad \sum_{x\in \underline{ C}}\alpha_x = 0   \]
\end{enumerate}
We partition each $C \in \mathcal{C}$ by $C^+=\{x\in C \st \alpha(x)>0  \}$, and $C^-=\{x\in C \st \alpha(x)<0  \}$. 
The set of circuits $\mathcal{C}$ defines an oriented matroid of rank $r+1$ on $E$.
We will say this oriented matroid is \dfn{induced} by $E$.
An oriented matroid is said to be \textit{acyclic} if for each of its circuits $C$, $C^{-}\ne \emptyset$ and $C^{+}\ne \emptyset$;
otherwise it is said to be \dfn{cyclic}.
Oriented matroids defined by conditions (1) and (2) above are acyclic, since if $\sum_{x\in \underline{C}}\alpha_x = 0$, 
not all $\alpha_x$ are positive and not all $\alpha_x$ are negative.
An oriented matroid $\mathcal{M}$ of rank $r$ with ground set $E$
is said to be \textit{uniform}
if the collection of the underlying sets of the circuits of $\mathcal{M}$
equals the collection of all $(r+1)$--subsets of $E$.
We denote by $\mathrm{OM}(n,r)$ 
the set of all uniform oriented matroids of rank $r$ on a ground set with $n$ elements.   
Note that conditions (1) and (2) do not necessarily give a uniform oriented matroid.

Let $E$ be a set of points spanning $\mathbb{R}^3$ 
such that no four points are coplanar.
(This is equivalent to saying $E$ is
the set of vertices of a linearly embedded $K_n$ with $n \ge 4$.)
For any five points $a,b,c,d,e$ in $E$,
if one of them is inside the tetrahedron determined by the other four,
then the picture determine by the five points looks as in Figure~~\ref{circuits}(b);
otherwise, it looks as in Figure~~\ref{circuits}(a).
Accordingly, we have one of the following two cases.

Case 1. None of the five points is inside the tetrahedron determined by the other four.
Then, after relabeling the points if necessary, 
we can assume
that $\{a,b,c \}_{CH}$
and $\{d,e\}_{CH}$
intersect in some point 
 $\alpha_a a + \alpha_b b + \alpha_c c  = \alpha_d d  +\alpha_e e$,
 where $\alpha_a  + \alpha_b  + \alpha_c = 1$ and 
$\alpha_d  + \alpha_e = 1$. (For a set $S \subset \R^3$,  $S_{CH}$ denotes the convex hull of $S$.)
It follows that the signed set $(\{a,b,c\},\{d,e\})$
satisfies condition~(2)
(after reversing the signs of $\alpha_d$ and $\alpha_e$).
We say the circuit determined by these five points
has a (3--2)--partition.

Case 2.
None of the five points is inside the tetrahedron determined by the other four.
Then, after relabeling the points if necessary, 
we can assume $\{a,b,c,d \}_{CH}$ contains $e$.
So, by a similar reasoning to Case~1,
the signed set $(\{a,b,c,d\},\{e\})$
satisfies condition~(2).
 We say the circuit determined by the five points has a (4--1)--partition.
 
 Note that, because of non-coplanarity, no proper, non-empty subset of $\{a,b,c,d,e\}$
 satisfies condition~(2). 
 Therefore, in both cases above, condition~(1) is also satisfied.
 So $E$ induces a uniform, oriented matroid of rank~4.
 

\begin{figure}[htpb!]
\begin{center}
\begin{picture}(280, 100)
\put(0,0){\includegraphics{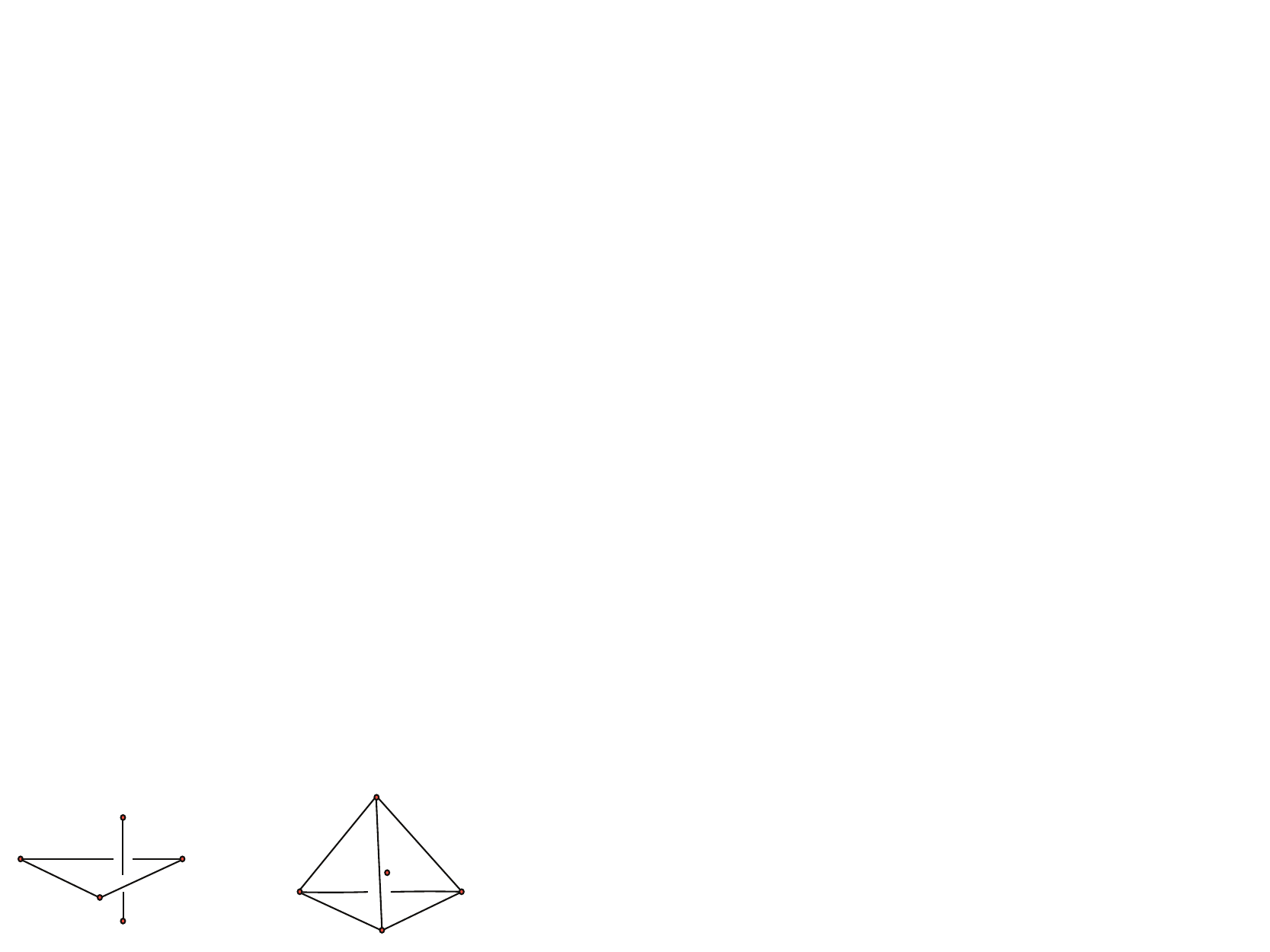}}
\put(4,56){$a$}
\put(118,56){$c$}
\put(61,22){$b$}
\put(75,10){$e$}
\put(75,90){$d$}
\put(35,0){(a)}
\put(221,0){(b)}
\put(180,35){$a$}
\put(238,3){$b$}
\put(294,35){$c$}
\put(236,103){$d$}
\put(244,55){$e$}
\end{picture}
\caption{(a) Circuit $(\{a,b,c\}, \{d,e\})$ . The edge $\{d,e\}_{CH}$ pierces the triangle $\{a,b,c\}_{CH}$. (b) Circuit $(\{a,b,c, d\}, \{e\})$. Point $e$ lies inside the tetrahedron $\{a,b,c,d\}_{CH}$.  }\label{circuits}
\end{center}
\end{figure}

\medskip

Oriented matroids can also be defined via basis orientations \cite{La}. 
For an integer $r\ge 1$ and a finite set $E$, an alternating map $\chi:E^{r} \rightarrow \{-1,0,1\}$ with certain properties called ``chirotope properties'' (\cite{BLSWZB}, p.124) gives a basis orientation for an oriented matroid of rank $r$ on $E$. 
The map $\chi$ specifies which $r$--subsets of $E$ are bases
by assigning a nonzero value to them, and assigns a sign to each subset which is a basis.
A chirotope $\chi:E^{r} \rightarrow \{-1,1\}$ defines  a \textit{uniform} oriented matroid (all $r$--subsets of $E$ are bases).
By ordering the elements of $E$, one can order the set of $r$--subsets of $E$.
A chirotope defining a uniform oriented matroid can be identified with a string of + and -- signs, of length $(_{\hspace{0.05in} r }^{|E|})$. 
The $k$th sign in the string represents the sign of the $k$th basis.
The string obtained by replacing every + with --, and vice versa, is defined to represent the same oriented matroid.

Given any uniform oriented matroid $\mathcal{M}$ of rank $r$ defined by a chirotope $\chi$,
we can obtain a collection $\mathcal{C}$ of circuits that define the same oriented matroid $\mathcal{M}$, as follows.
Let $\{x_1, x_2, \ldots, x_{r+1}\}$ be any ordered $(r+1)$--subset of the ground set $E$ of $\mathcal{M}$.
We define the sign of each $x_i$ by
\begin{equation}
\label{conversionEqn}
\textrm{sgn}(x_i) = (-1)^{i-1}\chi(\{x_1, \ldots, x_{i-1}, x_{i+1}, \ldots, x_{r+1}\})
\end{equation}
We let $C=(C^+, C^-)$, where $C^+=\{x_i |\, {\rm sgn}(x_i) >0 \}$ and $C^-=\{x_i |\, {\rm sgn}(x_i) <0 \}$.
Then, the collection $\mathcal{C}$ of all circuits $C$ defined this way will satisfy the circuit axioms 
and give the same oriented matroid as given by $\chi$.


\section{Proof of the Main Theorem}

Our proof of Theorem~\ref {mainTheorem} relies on the program which we describe at the end of this section.
The program relies on the following lemmas.


\begin{lemma}
Let $a,b,c,d,e,$ and $f$ be six distinct points in $\mathbb{R}^3$ such that no four are coplanar. 
Let $T$ be the triangle determined by $\{a,b,c\}$ (i.e., the boundary of $\{a,b,c\}_{CH}$) and  let $T'$ be the triangle determined by $\{d,e,f\}$. 
The following are equivalent:
\begin{enumerate}
\item The triangles $T$ and $T'$ are linked.
\item Exactly one edge of $T$  intersects $\{d,e,f \}_{CH}$.
\item Exactly one edge of $T'$  intersects $\{a,b,c \}_{CH}$.
\end{enumerate}
\label{Lemma-triangles} 
\end{lemma}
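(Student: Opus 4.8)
The plan is to use the flat triangular disks $D=\{a,b,c\}_{CH}$ and $D'=\{d,e,f\}_{CH}$ as spanning (Seifert) disks for $T=\partial D$ and $T'=\partial D'$, and to translate the linking of $T$ and $T'$ into a count of how $T'$ pierces $D$. First I would record the general-position consequences of the hypothesis that no four of the six points are coplanar. Let $P$ be the plane containing $T$. Then each of $d,e,f$ lies strictly off $P$ (else four points would be coplanar), so each edge of $T'$ meets $P$ in at most one point. Moreover, no such piercing point can lie on an edge of $T$: if, say, the edge $\{d,f\}$ crossed $P$ at a point of the line through $a$ and $b$, then $a,b,d,f$ would be coplanar. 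Hence every point of $T'\cap D$ is a transverse piercing of the relative interior $\mathrm{int}(D)$.

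Next I would count crossings. Since $T'$ is a closed curve whose three vertices are distributed on the two open sides of $P$, a pigeonhole argument gives that either all three vertices lie on one side (no edge crosses $P$) or two lie on one side and one on the other (exactly the two edges at the lone vertex cross $P$). Thus the number of edges of $T'$ crossing $P$ is $0$ or $2$, and therefore the number of edges of $T'$ meeting $D$ is $0$, $1$, or $2$. To prove $(2)\Rightarrow(1)$, suppose exactly one edge of $T'$ pierces $D$. Then the algebraic intersection number of the closed curve $T'$ with the spanning disk $D$ is $\pm 1$, so $\lk(T,T')=\pm 1\neq 0$, and a link with nonzero linking number is non-split, i.e.\ $T$ and $T'$ are linked.

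For the converse I would prove $\neg(2)\Rightarrow\neg(1)$ by exhibiting, in each remaining case, an honest spanning disk for one triangle that is disjoint from the other; this directly shows the link is split (a sphere in a thin regular neighborhood of the disk separates the two components). If no edge of $T'$ meets $D$, then $T'\cap D=\emptyset$, so $D$ is a spanning disk for $T$ missing $T'$. If two edges of $T'$ meet $D$, then both piercing points lie in $\mathrm{int}(D)$; the set $D'\cap P$ is precisely the segment joining these two points, and since $\mathrm{int}(D)$ is convex this whole segment lies in $\mathrm{int}(D)$. Consequently $T=\partial D$ is disjoint from $D'$, so $D'$ is a spanning disk for $T'$ missing $T$. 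In either case the triangles are split, completing $(1)\Leftrightarrow(2)$. Because the entire argument is symmetric under interchanging $\{a,b,c\}$ with $\{d,e,f\}$, the same reasoning gives $(1)\Leftrightarrow(3)$, and hence all three statements are equivalent.

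The step I expect to be the main obstacle is the converse in the two-crossing case. A vanishing linking number alone does \emph{not} imply that two curves are unlinked, so I must not argue merely that the two piercing points contribute cancelling signs; instead the crux is the geometric observation that when both piercing points lie in $D$, the entire intersection segment $D'\cap P$ lies in $\mathrm{int}(D)$ and hence avoids $\partial D=T$, yielding a genuine disjoint spanning disk $D'$ for $T'$. The remaining care is bookkeeping: verifying via the no-four-coplanar hypothesis that all piercings are transverse and interior, with no incidental coincidences, so that the crossing count and the intersection-number computation are valid.
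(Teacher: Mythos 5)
Your proof is correct, and it follows the same basic strategy as the paper: use the flat disks $\{a,b,c\}_{CH}$ and $\{d,e,f\}_{CH}$ as spanning surfaces and read off the linking number from transverse intersection points, with the no-four-coplanar hypothesis guaranteeing that all intersections are transverse, interior, and bounded in number. The one genuine difference is in the converse direction. The paper argues that if zero or two edges pierce the disk then the signed intersection count vanishes, hence $\lk(T,T')=0$, and treats that as synonymous with ``not linked''; since the paper's notion of linked is \emph{non-split}, and a vanishing linking number does not in general force a two-component link to be split, this step is really using an unstated fact about pairs of flat triangles. You identified exactly this issue and closed it by producing an explicit splitting sphere in each bad case: when no edge of $T'$ meets $D$ you split along $D$ itself, and when two edges meet $D$ you observe that $D'\cap P$ is the segment joining the two piercing points, which by convexity lies in the relative interior of $D$ and hence misses $T=\partial D$, so $D'$ is a spanning disk for $T'$ disjoint from $T$. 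This costs you a short convexity argument but buys a proof that never needs the implication ``$\lk=0$ implies split,'' which is false for general links; in that respect your write-up is more complete than the paper's. The remaining bookkeeping (vertices of $T'$ off the plane $P$, piercing points off the edges of $T$, parity forcing $0$ or $2$ plane crossings) is all handled correctly.
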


\begin{proof}
We prove $(1) \Leftrightarrow (2).$ By symmetry we get $(1) \Leftrightarrow (3).$

$(1) \Rightarrow (2).$ Assume the triangles $T$ and $T'$ are linked. 
Choose orientations for $\{a,b,c \}_{CH}$ and $\{d,e,f \}_{CH}$ and take the induced orientations on $T$ and $T'$, respectively.
 Since $\lk(T,T') \ne 0$  represents the signed sum of the intersections of  $T$ with   $\{d,e,f\}_{CH}$, at least one of the edges of $T$ intersects $\{d,e,f\}_{CH}$. 
 Assume exactly two edges of  $T$ intersect $\{d,e,f\}_{CH}$.
Since the two edges share a vertex, they intersect $\{d,e,f\}_{CH}$  with different signs and hence $\lk(T,T')= 0$, contradicting the hypothesis.
Three edges of $T$ cannot possibly intersect $\{d,e,f\}_{CH}$, since this would imply that the two planes determined by $\{a,b,c\}_{CH}$ and $\{d,e,f\}_{CH}$  share  three non-colinear points, and hence they coincide,  contradicting the non-planarity hypothesis. 

$(2) \Rightarrow (1).$ Assume $\{a,b \}_{CH}$ is the unique edge of $T$ intersecting $\{d,e,f\}_{CH}$.
Then $\lk(T,T') = \pm 1$, that is, $T$ and $T'$ are linked. 
\end{proof}


In light of the above lemma, the following definition gives a ``natural extension'' of the concept of linked triangles to oriented matroids.

\begin{definition}
Let $\mathcal{M}$ be an oriented matroid of rank 4 on a set $E$ with $|E|\ge 6$. 
To a pair of disjoint triangles $T= \{a,b,c\} \subset E$ and $T'=\{d,e,f\}\subset E$ we associate the sets $S=\{(\{a,b,c\}, \{d,e\}), (\{a,b,c\}, \{d,f\}), (\{a,b,c\}, \{e,f\}) \}$ and $S'=\{(\{d,e,f\}, \{a,b\}),(\{d,e,f\}, \{a,c\}), (\{d,e,f\}, \{b,c\}) \}$.
Then $T$ and $T'$ are said to be \textit{linked} if exactly one element of each of $S$ and $S'$ is a circuit of $\mathcal{M}$.  
\label{linkedtriangles}
\end{definition}


\begin{lemma}
Let $\mathcal{M}$ be an oriented matroid  on a set $E$ and  $A\subset E$. Then $_{-A}\mathcal{M}$ is cyclic if and only if  there exists  a circuit $C=(C^+, C^-)$ of $\mathcal{M}$ such that $A\cap \underline{C} = C^+$ or $A\cap \underline{C}= C^-$.
\label{cyclicreor}
\end{lemma}
\begin{proof}
By definition, 
$_{-A}\mathcal{M}$ is cyclic if and only if for some circuit $C_A=(C_A^+, C_A^-)$ of $_{-A}\mathcal{M}$ we have $C_A^+ = \emptyset$ or  $C_A^-=\emptyset$.
Now, $C_A = _{-A}C$ for some circuit $C$ of $\mathcal{M}$.
So we get: \\
$C_A^+ = \emptyset$ or $C_A^-=\emptyset $ \\
$\iff$ 
 $(C^+\setminus A) \cup (C^-\cap A) = \emptyset$ 
or
$(C^-\setminus A) \cup (C^+\cap A) = \emptyset$ \\
$\iff$ 
$C^+ \subset A$ and $C^-\cap A = \emptyset$ 
or
$C^-\subset A$ and $C^+\cap A = \emptyset$ \\
$\iff$ 
$A\cap \underline{C} = C^+$ or $A\cap \underline{C}= C^-$.

\end{proof}

To obtain the isomorphism class of an oriented matroid $\mathcal{M}$, it suffices to reorient only on sets  $A\subset E$ with $|A|\le \frac{1}{2}|E|$, since reorientation on $E\setminus A$ gives the same matroid as reorientation on $A$. 
For an oriented matroid on 9 elements,
there are $ \sum_{i=0}^4 {9\choose i} = 256$ such reorientation sets $A$.

Since an oriented matroid induced by a linear spatial graph is acyclic, 
our program only tests the acyclic oriented matroids of OM$(9, 4)$ for non-split 3--links.
For a given $\mathcal{M}$, we let $\mathcal{A}_{\mathrm{cyclic}}$ denote the collection of subsets  $A\subset E$ such that $_{-A}\mathcal{M}$ is cyclic. 
These are sets $A$ as in Lemma \ref{cyclicreor}.


\subsection{Description of the Mathematica Program.} 
Our program checks, as follows, that  in every acyclic $\mathcal{M}\in \mathrm{OM}(9,4)$  there are three disjoint triangles $T$, $T'$, and $T''$ such that $T$ links both $T'$ and $T''$.
(The following is described in greater detail in~\cite{NP-appendix}.)

\begin{enumerate}

\item 
Read in, from the given database of $\mathrm{OM}(9,4)$ isomorphism classes,
the oriented matroid $\mathcal{M}$ given for each isomorphism class;
$\mathcal{M}$ is given in chirotope form 
(as a string of ${9\choose4}=126$  + and  -- signs ).

\item
Compute the set of ${9\choose5}=126$ circuits for each $\mathcal{M}$ in (1),
as described in Equation~\ref {conversionEqn}. 

\item Using Lemma~\ref{cyclicreor}, compute the set $\mathcal{A}_{\mathrm{cyclic}}$. 

\item
For each $\mathcal{M}$,
and for each reorientation set $A$ not in $\mathcal{A}_{\mathrm{cyclic}}$,
compute the circuits of the oriented matroid $_{-A}\mathcal{M}$.

\item 
For each triangle $T$ in $K_9$,
find, from the (3--2)--partitioned circuits of $_{-A}\mathcal{M}$, 
the set of all edges that pierce $T$;
use this  and Lemma ~\ref{Lemma-triangles} to find the set of all triangles $T'$ that link $T$;
and use this in turn to determine if $T$ belongs to a non-split 3--link.

\end{enumerate}

There are 9,276,595 isomorphism classes in the set $\mathrm{OM}(9,4)$,
and each isomorphism class consists of 256 oriented matroids
(including cyclic and acyclic ones).
 Among these 2,374,808,320 oriented matroids, 
 the program checked all the acyclic ones and found a non-split 3-link in each one.

\begin{remark}
The program gives us a possibly stronger result than Theorem~\ref{mainTheorem}
since it found a non-split 3--link in all the acyclic oriented matroids in OM$(9,4)$.
However, we don't know which of these matroids are induced by an embedded $K_9$. 
As far as we know, there is no known systematic way of determining in general when an acyclic oriented matroid is induced by an embedding. 
\end{remark}


\bibliographystyle{amsplain}

\end{document}